
\documentclass[11pt]{amsart}
\usepackage{amssymb,latexsym}
\usepackage{enumerate}

\makeatletter
\@namedef{subjclassname@2010}{%
  \textup{2010} Mathematics Subject Classification}
\makeatother


\newtheorem{theorem}{Theorem}[section]
\newtheorem{lemma}[theorem]{Lemma}
\newtheorem{claim}[theorem]{Claim}
\newtheorem{example}[theorem]{Example}
\newtheorem{problem}[theorem]{Problem}


\numberwithin{equation}{section}


\frenchspacing

\textwidth=480pt
\textheight=660pt
\parindent=16pt
\oddsidemargin=0cm
\evensidemargin=0cm
\topmargin=-0.5cm



\newcommand{\w}{\omega}
\newcommand{\e}{\varepsilon}

\newcommand{\IN}{\mathbb N}
\newcommand{\IR}{\mathbb R}
\newcommand{\pr}{\mathrm{pr}}
\newcommand{\la}{\langle}
\newcommand{\ra}{\rangle}
\newcommand{\supp}{\mathrm{supp}}
\newcommand{\conv}{\mathrm{conv}}
\newcommand{\aff}{\mathrm{aff}}
\newcommand{\lin}{\mathrm{lin}}
\newcommand{\Ra}{\Rightarrow}
\newcommand{\Ker}{\mathrm{Ker}}
\newcommand{\A}{\mathcal A}
\newcommand{\C}{\mathcal C}


\begin{document}





\title[A characterization of polyhedral convex sets]{A ``hidden'' characterization of\\ polyhedral convex sets}

\author[T. Banakh]{Taras Banakh}
\address{Ivan Franko National University of Lviv, Ukraine, and\newline\indent Jan Kochanowski University, Kielce, Poland}
\email{t.o.banakh@gmail.com}

\author[I. Hetman]{Ivan Hetman}
\address{Ivan Franko National University of Lviv\newline\indent
Universytetska 1, Lviv, 79000, Ukraine}
\email{ihromant@gmail.com}

\date{}

\begin{abstract}
We prove that a closed convex subset $C$ of a complete linear metric space $X$ is polyhedral in its closed linear hull if and only if no infinite subset $A\subset X\backslash C$ can be hidden behind $C$ in the sense that $[x,y]\cap C\not = \emptyset$ for any  distinct points $x,y\in A$.
\end{abstract}

\subjclass[2010]{Primary 46A55, 52A07; Secondary 52B05, 52A37}

\keywords{Polyhedral convex set; a hidden set; complete linear metric space}

\maketitle

\section{Introduction}
A convex subset $C$ of a real linear topological space $L$ is called \mbox{\em polyhedral} {\em in} $L$ if it can be written as a finite intersection $C=\bigcap_{i=1}^nf_i^{-1}\big((-\infty,a_i]\big)$ of closed half-spaces determined by some linear continuous functionals $f_1,\dots,f_n:L\to\IR$ and some real numbers $a_1,\dots,a_n$, see \cite{A}.

This notion has also an algebraic version. We shall say that a convex subset $C$ of a linear space $L$ is {\em polyhedric} in a convex set $D\supset C$ of $L$ if
$C=\bigcap_{i=1}^nH_i$ for some convex subsets $H_1,\dots,H_n\subset D$ having convex complements $D\setminus H_i$, $i\le n$.

In this paper polyhedral sets will be characterized with help of a combinatorial notion of a hidden set.

We say that a subset $A$ of a linear space $L$ is {\em hidden behind
a set $C\subset L$} if $A\subset L\setminus C$ and for any distinct
points $a,b\in A$ the closed segment $[a,b]=\{ta+(1-t)b:t\in[0,1]\}$
meets the set $C$. In this case we shall also say that the set $C$
{\em hides} the set $A$.

The main result of this paper is the following ``hidden'' characterization of closed polyhedral convex sets in complete linear metric spaces. This characterization has been applied in the paper \cite{BH2} devoted to a characterization of approximatively polyhedral convex sets in Banach spaces, which was applied in the paper \cite{BHS} devoted to recognizing the topological type of connected components of the hyperspace of closed convex subsets of a Banach space. Some other characterizations of polyhedral convex sets can be found in \cite{WW} and \cite{Klee}.

\begin{theorem}\label{main} For a closed convex subset $C$ of a complete linear metric space $X$ the following conditions are equivalent:
\begin{enumerate}
\item $C$ is polyhedral in its closed linear hull\/ $\overline{\lin}(C)$;
\item $C$ is polyhedric in its affine hull $\aff(C)$;
\item $C$ hides no infinite subset $A\subset X\setminus C$.
\end{enumerate}
\end{theorem}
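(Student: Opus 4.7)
My plan is to establish the chain $(1)\Ra(2)\Ra(3)\Ra(1)$.

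The implication $(1)\Ra(2)$ is a routine restriction: writing $C=\bigcap_{i=1}^n f_i^{-1}((-\infty,a_i])$ inside $\overline{\lin}(C)$ and intersecting each half-space with $\aff(C)$ yields convex subsets $H_i\subset\aff(C)$ whose complements are open affine half-spaces of $\aff(C)$, hence convex, exhibiting $C$ as polyhedric in $\aff(C)$.

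For $(2)\Ra(3)$, fix $C=\bigcap_{i=1}^n H_i$ with each $U_i:=\aff(C)\setminus H_i$ convex, and let $A$ be hidden behind $C$. I would first establish a dichotomy: either $A\subset\aff(C)$ or $A\cap\aff(C)=\emptyset$. Indeed, a mixed pair $a\in A\cap\aff(C)$, $b\in A\setminus\aff(C)$ would force $[a,b]\cap\aff(C)=\{a\}\not\subset C$, violating hiddenness. In the first case, the assignment $a\mapsto i(a)\in\{1,\ldots,n\}$ with $a\in U_{i(a)}$ is injective (two coincidences would trap a segment in the convex set $U_i$, disjoint from $H_i\supset C$), so $|A|\le n$. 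In the second case, the linear quotient $q\colon X\to X/V$ with $V:=\lin(\aff(C)-c_0)$ (for any $c_0\in C$) collapses $\aff(C)$ to a single point, and hiddenness translates into any two nonzero images $q(a),q(b)$ lying on opposite rays from the origin; a triple $a,b,c\in A$ would then force $q(a)$ to be both a positive and a negative multiple of $q(c)$, yielding $|A|\le 2$. Either way $A$ is finite.

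The substantive direction $(3)\Ra(1)$ I argue by contrapositive: assuming $C$ is not polyhedral in $X:=\overline{\lin}(C)$, I construct an infinite hidden set $A\subset X\setminus C$. The guiding picture, illustrated by the parabola $C=\{y\ge x^2\}$ in $\IR^2$---where $(2k,\,4k^2-1)$ for $k=0,1,2,\ldots$ are pairwise hidden because the midpoint of each pair lies on or above the parabola---is that non-polyhedrality produces an essentially infinite supply of supporting hyperplanes of $C$; placing points $x_n$ just outside $C$ across distinct such hyperplanes, close to a selected exposed point, should pull every pairwise segment back through $C$. Concretely, I would extract inductively continuous linear functionals $f_n\colon X\to\IR$ and reals $a_n$ so that $H_n:=f_n^{-1}((-\infty,a_n])\supset C$, the exposed face $F_n:=C\cap f_n^{-1}(a_n)$ is nonempty, and no finite subcollection of the $H_n$ cuts out $C$; then pick $e_n\in F_n$ and $x_n$ slightly beyond $e_n$ on the $f_n>a_n$ side, close enough to $e_n$ that $x_n$ still lies in every earlier $H_m$. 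The segment $[x_m,x_n]$ is then forced across $\{f_m=a_m\}$, and the proximity of $x_m$ to $e_m\in F_m$ pulls this crossing into $C$.

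The main obstacle is the extraction step itself---converting the failure of polyhedrality into a concrete infinite sequence of supporting functionals with the ``not finitely reducible'' property---and this is precisely where completeness of $X$ enters: I expect a Baire-category or diagonal argument in the metrizable space of supporting functionals of $C$ to produce the required sequence together with the quantitative metric control needed to place each $x_n$ close enough to $F_n$ so that \emph{every} pair $(x_m,x_n)$, not merely consecutive ones, has its segment genuinely pierce $C$.
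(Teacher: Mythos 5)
Your implications $(1)\Ra(2)$ and $(2)\Ra(3)$ are correct and essentially coincide with the paper's arguments (the paper bounds $|A\setminus\aff(C)|$ by $2$ via an affine-plane argument rather than your quotient by $\lin(\aff(C)-c_0)$, but both work, and your pigeonhole in the case $A\subset\aff(C)$ is exactly the paper's). The problem is $(3)\Ra(1)$: you have only sketched it, and the sketch leaves open precisely the step that constitutes the bulk of the paper.

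There are two concrete gaps. First, before one can speak of supporting hyperplanes, exposed faces $F_n$, and "placing $x_n$ slightly beyond $e_n$ on the $f_n>a_n$ side," one must reduce to a \emph{convex body}: a closed convex subset of an infinite-dimensional space may have empty interior, in which case every point is a boundary point and the Hahn--Banach picture you rely on is unavailable. The paper handles this by first quotienting by $\Ker(C)$ and then (Lemma~\ref{l2.3}) building a continuous injective affine map $T:l_2\to Y$ whose preimage of $C$ is a closed convex body in $l_2$; the construction sums an infinite series and is one of the two places where completeness genuinely enters. Your sketch does not address this reduction. Second, and more seriously, the requirement that \emph{every} pair $[x_m,x_n]$ meet $C$ is not a consequence of "proximity of $x_m$ to $e_m$": the point where $[x_m,x_n]$ crosses the hyperplane $\{f_m=a_m\}$ depends on $x_n$ and may lie far from $e_m$, hence outside $C$, no matter how small the bump at $x_m$ is. The paper needs real machinery here: Mazur's theorem and a Borel selection give a dense $G_\delta$ set $G$ of smooth boundary points on which the support map $\sigma$ is continuous, and the argument then splits according to whether $\sigma(G)$ is uncountable (Silver's theorem produces a Cantor set on which $\sigma$ is injective, and an atomless measure is used to choose each new base point in the intersection $\bigcap_{k<n}\Lambda(x_k,\e_k)$ of visibility sets, which is what makes \emph{all} pairs, not just consecutive ones, work) or countable (a Baire-category argument produces infinitely many faces with nonempty interior in $\partial C$, and the quantitative Claim~\ref{cl4}, with $\delta_n\le\frac13\e_n^2$, forces $[x_n+\delta_ny_n,x_m]$ to meet the interior of $C$ for every $m\ne n$). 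Your closing sentence, "I expect a Baire-category or diagonal argument," names the missing proof rather than supplying it, so the implication $(3)\Ra(1)$ remains unestablished.
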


The proof of this theorem is rather long and will be presented in Section~\ref{main:pf}. Now let us show that the assumption of the completeness of the linear space $X$ in Theorem~\ref{main} is essential. A suitable  counterexample will be constructed in the (non-complete) normed space
$$c_{00}=\{(x_n)_{n\in\w}\in \IR^\w:\exists n\in\w\;\forall m\ge n\;\;x_m=0\}$$endowed with $\sup$-norm
$\|x\|=\sup_{n\in\w}|x_n|$, where $x=(x_n)_{n\in\w}\in c_{00}$.

\begin{example} The standard infinite-dimensional simplex
$$\Delta=\{(x_n)_{n\in\w}\in c_{00}\cap[0,1]^\w:\sum_{n\in\w}x_n=1\}\subset c_{00}$$ hides no infinite subset of $c_{00}\setminus\Delta$ but is not polyhedral in $c_{00}$.
\end{example}

\begin{proof} First we show that the simplex $\Delta$ is not polyhedral in $c_{00}$. Assuming the opposite, we would find  linear functionals $f_1,\dots,f_n:c_{00}\to\IR$ and real numbers $a_1,\dots,a_n$ such that $\Delta=\bigcap_{i=1}^nf_i^{-1}((-\infty,a_i])$. Consider the linear subspace $X_0=\bigcap_{i=1}^n f_i^{-1}(0)$ that has finite codimension in $c_{00}$. It follows that for each $x_0\in\Delta$, we get $x_0+X_0\subset\bigcap_{i=1}^nf_i^{-1}((-\infty,a_i])=\Delta$, which implies that the set $\Delta$ is unbounded. This contradiction shows that $\Delta$ is not polyhedral in $c_{00}$.

Now assume that some infinite subset $A\subset c_{00}\setminus\Delta$
can be hidden behind the simplex $\Delta$. Decompose the space
$c_{00}$ into the union $c_{00}=\Sigma_<\cup\Sigma_1\cup\Sigma_>$ of the
sets
$$
\begin{aligned}
\Sigma_<&=\{(x_n)_{n\in\w}\in c_{00}:\sum_{n\in\w}x_n<1\},\\
\Sigma_1&=\{(x_n)_{n\in\w}\in c_{00}:\sum_{n\in\w}x_n=1\},\\
\Sigma_>&=\{(x_n)_{n\in\w}\in c_{00}:\sum_{n\in\w}x_n>1\}.
\end{aligned}
$$
Observe that for any two points $x,y\in\Sigma_{<}$ the segment $[x,y]$ does not intersect $\Delta$. Consequently, $|A\cap\Sigma_{<}|\le 1$. By the same reason, $|A\cap\Sigma_>|\le 1$. So, we lose no generality assuming that $A\subset\Sigma_1\setminus\Delta$. For each element $a\in A$ let
$$\supp_+(a)=\{n\in\w:x_n>0\}\mbox{ \ and \ }\supp_-(a)=\{n\in\w:x_n<0\}.$$
It is easy to see that each point $a\in\Sigma_1\setminus\Delta$ has non-empty negative support $\supp_-(a)$.

Fix any point $b\in A$. We claim that for any point $a\in A\setminus\{b\}$ we get $\supp_-(a)\subset\supp_+(b)$.
In the opposite case the set $\supp_-(a)\setminus \supp_+(b)$ contains some number $k\in\w$ and then
$[a,b[\subset \{(x_n)_{n\in\w}\in c_{00}:x_k<0\}\setminus\Delta$ which is not possible as $\{a,b\}$ is hidden behind $\Delta$. Since (the power-set of) the set $\supp_+(b)$ is finite and $A\setminus\{b\}$ is infinite, the Pigeonhole Principle yields two distinct points $a,a'\in A$ such that $\supp_-(a)=\supp_-(a')\subset\supp_+(b)$. Now we see that for any $k\in\supp_-(a)=\supp_-(a')$, we get $[a,a']\subset\{(x_n)_{n\in\w}\in c_{00}:x_n<0\}\subset c_{00}\setminus\Delta$, which contradicts the choice of $A$ as a set hidden behind $\Delta$.
\end{proof}

\section{Preliminaries}

In this section we prove some lemmas which will be used in the proof of Theorem~\ref{main}.

\begin{lemma}\label{l2.1} Let $T:X\to Y$ be a linear continuous operator between linear topological spaces. If a convex subset $D\subset Y$ is polyhedral in its closed linear hull $\overline{\lin}(D)$, then its preimage $C=T^{-1}(D)$ is polyhedral in its closed linear hull $\overline{\lin}(C)$.
\end{lemma}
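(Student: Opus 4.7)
The plan is to pull back the polyhedral representation of $D$ through $T$. Set $Y_0=\overline{\lin}(D)$ and $X_0=\overline{\lin}(C)$. By hypothesis we can write
$$D=\bigcap_{i=1}^n g_i^{-1}\bigl((-\infty,b_i]\bigr)$$
for some continuous linear functionals $g_i:Y_0\to\IR$ and real numbers $b_1,\dots,b_n$. The goal is to produce an analogous description of $C$ inside $X_0$ by taking $f_i:=g_i\circ T$, restricted appropriately.

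The single technical point, and the place where the continuity of $T$ is used, is to verify that $T$ actually sends $X_0$ into $Y_0$, so that each composition $g_i\circ T$ is well-defined on all of $\overline{\lin}(C)$. Since $T(C)\subset D\subset\lin(D)$, linearity gives $T(\lin(C))\subset\lin(D)$, and continuity of $T$ then yields
$$T(X_0)=T(\overline{\lin}(C))\subset\overline{T(\lin(C))}\subset\overline{\lin(D)}=Y_0.$$
Therefore the restriction $T_0:=T|_{X_0}:X_0\to Y_0$ is a continuous linear operator, and each $f_i:=g_i\circ T_0:X_0\to\IR$ is a continuous linear functional on $\overline{\lin}(C)$.

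Finally, I would check that $C=\bigcap_{i=1}^n f_i^{-1}\bigl((-\infty,b_i]\bigr)$, with the intersection computed inside $X_0$. For any $x\in X_0$ we have $T(x)\in Y_0$, so the condition ``$f_i(x)\le b_i$ for all $i$'' is equivalent to ``$g_i(T(x))\le b_i$ for all $i$'', which by the polyhedral description of $D$ in $Y_0$ is in turn equivalent to $T(x)\in D$, i.e.\ to $x\in T^{-1}(D)=C$. This exhibits $C$ as polyhedral in $\overline{\lin}(C)$. The only step of substance is the containment $T(\overline{\lin}(C))\subset\overline{\lin}(D)$; the remainder is a mechanical transcription of the defining inequalities.
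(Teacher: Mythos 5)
Your proposal is correct and follows the same route as the paper: pull back the half-space representation of $D$ along $T$, the only point of substance being the inclusion $T(\overline{\lin}(C))\subset\overline{\lin}(D)$, which you verify (and the paper merely asserts) via $T(C)\subset D$, linearity, and continuity. No issues.
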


\begin{proof} Write the polyhedral set $D$ as a finite intersection $$D=\bigcap_{i=1}^nf_i^{-1}((-\infty,a_i])$$ of closed half-spaces defined by linear continuous functionals $f_1,\dots,f_n:\overline{\lin}(D)\to\IR$ and real numbers $a_1,\dots,a_n$.   The continuity of the operator $T$ implies that $T(\overline{\lin}(C))\subset\overline{\lin}(D)$. Consequently, for every $i\le n$ the linear continuous functional $g_i=f_i\circ T:\overline{\lin}(C)\to\IR$ is well-defined. Since $C=T^{-1}(D)=\bigcap_{i=1}^ng_i^{-1}((-\infty,a_i])$, the set $C$ is polyhedral in $\overline{\lin}(C)$.
\end{proof}

 An operator $A:X\to Y$ between two linear spaces is called {\em affine} if $$A(tx+(1-t)y)=tA(x)+(1-t)A(y)\mbox{ \ for any $x,y\in X$ and $t\in\IR$.}$$ It is well-known that an operator $A:X\to Y$ is affine if and only if the operator $B:X\to Y$, $B:x\mapsto A(x)-A(0)$, is linear. The following lemma trivially follows from the definition of a hidden set.

\begin{lemma}\label{l2.2} Let $T:X\to Y$ be an affine operator between linear topological spaces, $D\subset Y$ be a convex set, $C=T^{-1}(D)$, and $A\subset X\setminus C$ be a subset such that $T|A$ is injective. The set $C$ hides the set $A$ if and only if the set $D=T(C)$ hides the set $T(A)$.
\end{lemma}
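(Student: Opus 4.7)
The plan is to unwind both conditions to the defining statement about segments, and observe that an affine operator sends segments to segments. Concretely, for any $x,y\in X$ and $t\in[0,1]$, affinity gives $T(tx+(1-t)y)=tT(x)+(1-t)T(y)$, so $T([x,y])=[T(x),T(y)]$. In addition, $C=T^{-1}(D)$ means a point $c$ lies in $C$ if and only if $T(c)\in D$. These are the only two facts needed.

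For the forward implication, suppose $C$ hides $A$. Then $A\subset X\setminus C=T^{-1}(Y\setminus D)$, which immediately gives $T(A)\subset Y\setminus D$. For any two distinct points $a,b\in A$, the injectivity of $T|A$ gives distinct images $T(a)\neq T(b)$, and by assumption there is some $c\in[a,b]\cap C$. Then $T(c)\in T([a,b])=[T(a),T(b)]$ and $T(c)\in D$, so $[T(a),T(b)]\cap D\neq\emptyset$. Hence $D$ hides $T(A)$.

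For the reverse implication, suppose $D$ hides $T(A)$. First observe $A\subset X\setminus C$ by hypothesis. Pick distinct $a,b\in A$; by injectivity $T(a),T(b)$ are distinct elements of $T(A)$, so there exists $d\in [T(a),T(b)]\cap D$. Writing $d=tT(a)+(1-t)T(b)$ for some $t\in[0,1]$ and setting $c=ta+(1-t)b\in[a,b]$, affinity yields $T(c)=d\in D$, whence $c\in T^{-1}(D)=C$. Thus $[a,b]\cap C\neq\emptyset$, and $C$ hides $A$.

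There is no real obstacle here: the statement is precisely the compatibility of the hiding relation with pullback along an affine operator that is injective on the hidden set. The only subtle point is the role of the injectivity hypothesis on $T|A$, which is used solely to ensure that distinct points of $A$ produce distinct points of $T(A)$, so that the ``distinct points'' clause in the definition of hiding transfers faithfully in both directions.
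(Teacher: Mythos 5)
Your proof is correct and is exactly the routine unwinding that the paper has in mind when it says the lemma ``trivially follows from the definition of a hidden set'' (the paper supplies no written proof). Your argument correctly isolates the two facts needed --- $T$ maps $[a,b]$ onto $[T(a),T(b)]$ and $c\in C\Leftrightarrow T(c)\in D$ --- and correctly locates the one place where injectivity of $T|A$ is genuinely used, namely the reverse implication.
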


Let us recall that a convex subset $C$ of a linear topological space $X$ is called a {\em convex body} in $X$ if $C$ has non-empty interior in $X$.

\begin{lemma}\label{l2.3} Let $C$ be an infinite-dimensional closed convex subset of a complete linear metric space $Y$. If $C$ is infinite-dimensional, then there is an injective continuous affine operator $T:l_2\to Y$ such that $T^{-1}(C)$ is a closed convex body in $l_2$.
\end{lemma}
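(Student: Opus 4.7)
The plan is to produce $T$ as an affine perturbation around a ``centered'' point $c^*\in C$. The construction proceeds in three stages: select linearly independent points in $C$, symmetrize around a center $c^*\in C$ to obtain symmetric pairs $\pm u_k\in C-c^*$, and scale to build a continuous affine injection of $l_2$.

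After translating, I assume $0\in C$. Since $C$ is infinite-dimensional, I inductively pick linearly independent $c_1,c_2,\ldots\in C\setminus\{0\}$; because $0\in C$ and $C$ is convex, each $c_n$ may be replaced by $t_nc_n$ for a sufficiently small $t_n\in(0,1]$, and I choose these so that the F-norms $\|c_n\|$ of $Y$ satisfy $\sum_n\|c_n\|<\infty$. Set $c^*:=\sum_{n\ge1}2^{-n-1}c_n$. The series converges in $Y$, and each partial sum is a convex combination of $\{0\}\cup\{c_n\}\subset C$, so $c^*\in C$ by closedness. For every $k$ and every $\varepsilon$ with $|\varepsilon|\le 2^{-k-1}$, the point $c^*+\varepsilon c_k$ rewrites as a convex combination of $0$ and the $c_n$'s with non-negative weights summing to $1$, hence lies in $C$; setting $u_k:=2^{-k-1}c_k$ gives $\pm u_k\in C-c^*$.

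Fix a positive sequence $(a_k)$ with $\sum_k 1/a_k^2\le 1$ (say $a_k=2^k$) and define $T:l_2\to Y$ by $T(\alpha)=c^*+\sum_k(\alpha_k/a_k)u_k$. The F-norm inequality $\|\beta x\|\le(|\beta|+1)\|x\|$, the bound $|\alpha_k|\le\|\alpha\|_{l_2}$, and the summability $\sum_k\|u_k\|<\infty$ combine to give absolute convergence of the defining series for every $\alpha\in l_2$ and continuity of $T$. For $\|\alpha\|_{l_2}\le 1$ the Cauchy--Schwarz inequality gives $\sum_k|\alpha_k/a_k|\le 1$, so each partial sum of $\sum_k(\alpha_k/a_k)u_k$ is a convex combination of $\{0\}\cup\{\pm u_k\}\subset C-c^*$; taking the limit and using closedness of $C-c^*$, I obtain $T(\alpha)\in C$. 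Thus $T^{-1}(C)$ contains the closed unit $l_2$-ball and is a closed convex body.

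The sole nontrivial step is \emph{injectivity} of $T$: this reduces to showing that the sequence $(u_k)$ is $\omega$-linearly independent in $Y$, i.e., no convergent series $\sum_k\lambda_k u_k$ vanishes unless all $\lambda_k=0$. I would secure this by refining the inductive selection above: at stage $n$, insist that $c_n\in C$ be kept at a controlled positive distance from $\lin(c_1,\dots,c_{n-1})$ in the F-norm (possible by infinite-dimensionality of $C$), and then scale so that $\|c_n\|\to 0$ dominates the resulting gap bounds. A Bessaga--Pelczynski-style iterative estimate then recovers the coefficients $\lambda_k$ one at a time. Adapting this basic-sequence machinery from Banach spaces to the complete linear metric setting is the main obstacle of the proof.
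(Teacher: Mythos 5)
Your construction of the operator and the verification that the unit ball of $l_2$ maps into $C$ match the paper's argument almost exactly (the paper uses $S(e_n)=\tfrac1{4^n}y_n$ with $\|y_n\|\le 2^{-n}$ and the center $s_0=\sum_n\tfrac1{4^n}y_n$, exploiting the F-norm property $\|ty\|\le\|y\|$ for $|t|\le1$). The gap is exactly where you flag it: injectivity. Your plan is to force $\omega$-linear independence of the $u_k$ by a Bessaga--Pe\l czy\'nski-type selection, but that machinery relies on Hahn--Banach (or at least on a rich supply of continuous linear functionals) to recover coefficients from a convergent series, and a general complete linear metric space need not admit \emph{any} nonzero continuous linear functionals (e.g.\ $L_p$ with $0<p<1$). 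Keeping each $c_n$ at positive F-norm distance from $\lin(c_1,\dots,c_{n-1})$ does not by itself prevent a nontrivial convergent series $\sum_k\lambda_k u_k$ from vanishing, because later terms can cancel earlier ones; quantitative control of the projections onto initial segments is precisely what fails without functionals. Moreover you must select the $c_n$ inside $C$, which further constrains the selection. So the ``main obstacle'' you name is a genuine obstruction, not a routine adaptation.

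The paper sidesteps injectivity entirely: it lets $\bar S:l_2\to Y$ be the (possibly non-injective) series operator, takes $H=\bar S^{-1}(0)$ and restricts to the orthogonal complement $H^\perp$ in $l_2$. The restriction is automatically injective; the preimage of $C$ still contains $B_1\cap H^\perp$, hence is a convex body in $H^\perp$; and $H^\perp$ is infinite-dimensional (its image under $\bar S$ contains the linearly independent $y_n$), so $H^\perp$ is isomorphic to $l_2$. This uses the Hilbert structure of the \emph{domain} instead of fighting the geometry of the target F-space. Your proof becomes correct if you replace the injectivity step with this kernel-quotient argument; as written, it is incomplete.
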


\begin{proof} By \cite[1.2.2]{Rol}, the topology of $Y$ is generated by a complete invariant metric $d$ such that the $F$-norm
$\|y\|=d(y,0)$ has the property $\|ty\|\le\|y\|$ for all $y\in Y$ and $t\in[-1,1]$.

 We lose no generality assuming that the convex set $C$ contains the neutral element $0$ of $Y$. In this case for any points $y_n\in C$, $n\in\w$, and any non-negative real numbers $t_n$, $n\in\w$, with $\sum_{n\in\w}t_n\le 1$ we get $\sum_{n\in\w}t_ny_n\in C$ whenever the series $\sum_{n=0}^\infty t_ny_n$ converges in $Y$.

The set $C$ is infinite dimensional and hence contains a linearly independent sequence $(y_n)_{n=1}^\infty$. Multiplying each $y_n$ by a small positive real number, we can additionally assume that $\|y_n\|\le 2^{-n}$. It follows that series $\sum_{n=1}^\infty \frac1{4^n}y_n$ converges in $Y$ and its sum $s_0=\sum_{n=1}^\infty \frac1{4^n}y_n$ belongs to the closed convex set $C$ as $\sum_{n=1}^\infty\frac1{4^n}=\frac13\le 1$.

Let $l_2^f$ be the linear hull of the standard orthonormal basis $(e_n)_{n\in\w}$ in the separable Hilbert space $l_2$.
Define a linear operator $S:l_2\to Y$ letting $S(e_n)=\frac1{4^n}y_n$ for every $n\in\IN$. The convergence of the series $\sum_{n=1}^\infty\|y_n\|$ implies that the operator $S$ is continuous and hence can be extended to a continuous linear operator $\bar S:l_2\to Y$. Let $B_1=\{x\in l_2:\|x\|<1\}$ denote the open unit ball in the Hilbert space $l_2$. We claim that $\bar S(B_1)+s_0\subset C$. Indeed, for every $x=(x_n)_{n=1}^\infty\in B_1$ and every $n\in\IN$ we get $|x_n|\le 1$ and hence
$$\frac1{4^n}+\frac{x_n}{4^n}\ge \frac1{4^n}-\frac1{4^n}=0.$$ Taking into account that
$$\sum_{n=1}^\infty\big(\frac1{4^n}+\frac{x_n}{4^n}\big)\le\sum_{n=1}^\infty\frac{2}{4^n}=\frac23\le 1$$ and $0\in C$, we conclude that
$$s_0+\bar S(x)=\sum_{n=1}^\infty\frac1{4^n}y_n+\sum_{n=1}^\infty \frac{x_n}{4^n}y_n\in C.$$

Let $H=\bar S^{-1}(0)$ be the kernel of the operator $\bar S$ and $H^\perp$ be the orthogonal complement of $H$ in the Hilbert space $l_2$. It follows that the affine operator $T:H^\perp\to Y$, $T:x\mapsto \bar S(x)+s_0$, is injective and the preimage $T^{-1}(C)$ contains the unit ball $B_1\cap H^\perp$ of the Hilbert space $H^\perp$. So, $T^{-1}(C)$ is a closed convex body in the Hilbert space $H^\perp$. Since $\bar S(l_2)=\bar S(K^\perp)\supset\{y_n\}_{n\in\w}$, the Hilbert space $H^\perp$ is infinite-dimensional and hence can be identified with $l_2$.
\end{proof}

The following lemma is the most important and technically difficult
ingredient of the proof of Theorem~\ref{main}.

\begin{lemma}\label{l0} If a closed convex body $\bar C$ in a separable Hilbert space $X$ is not polyhedral, then $\bar C$ hides some infinite subset $A\subset X\setminus \bar C$.
\end{lemma}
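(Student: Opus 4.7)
The plan is to translate the non-polyhedrality of $\bar C$ into a countable family of ``independent'' supporting half-spaces, and then to exhibit the hidden set by nudging the corresponding support points slightly outside $\bar C$ along the outer normals.

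First I would normalize so that $0\in\mathrm{int}(\bar C)$. The key inductive step constructs unit outer normals $\nu_n\in X$ together with boundary points $p_n\in\partial\bar C$ satisfying $\la\nu_n,p_n\ra=h_n:=\sup_{x\in\bar C}\la\nu_n,x\ra$. Given $\nu_1,\dots,\nu_k$, the intersection of supporting half-spaces
$$H_k=\bigcap_{i\le k}\{x\in X:\la\nu_i,x\ra\le h_i\}$$
must strictly contain $\bar C$ (otherwise $\bar C$ would be polyhedral); choosing any $q_k\in H_k\setminus\bar C$, the fact that $X$ is a Hilbert space lets me take the nearest point $p_{k+1}\in\bar C$ to $q_k$ and set $\nu_{k+1}=(q_k-p_{k+1})/\|q_k-p_{k+1}\|$, obtaining an outer normal that is automatically attained at a genuine boundary point. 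Varying $q_k$ within the open set $H_k\setminus\bar C$ (and passing to a subsequence if needed) lets one arrange in addition that no two of the $p_n$ lie on a common supporting hyperplane, so that the midpoint $\tfrac12(p_n+p_m)$ belongs to $\mathrm{int}(\bar C)$ for every $n\ne m$.

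Next I would fix positive numbers $\e_n$ decreasing fast enough that
$$\e_n+\e_m<2\,\mathrm{dist}\bigl(\tfrac12(p_n+p_m),\;X\setminus\bar C\bigr)$$
for every pair $n\ne m$, and define $a_n=p_n+\e_n\nu_n$. The identity $\la\nu_n,a_n\ra=h_n+\e_n>h_n$ places $a_n$ strictly outside the supporting half-space $\{x:\la\nu_n,x\ra\le h_n\}$, hence outside $\bar C$. On the other hand, for distinct $n,m$ the midpoint
$$\tfrac12(a_n+a_m)=\tfrac12(p_n+p_m)+\tfrac12(\e_n\nu_n+\e_m\nu_m)$$
lies within distance $(\e_n+\e_m)/2$ of the interior point $\tfrac12(p_n+p_m)$, so it still belongs to $\bar C$; in particular $[a_n,a_m]\cap\bar C\ne\emptyset$. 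The set $A=\{a_n:n\in\IN\}$ is then the required infinite subset of $X\setminus\bar C$ hidden behind $\bar C$.

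The main obstacle is the inductive construction of the sequence $(\nu_n,p_n)$: one must simultaneously arrange that the nested half-space intersections $H_k$ continue to strictly decrease toward $\bar C$ at every stage (this is where non-polyhedrality is genuinely used) and that no two of the chosen support points share a common supporting hyperplane (so that midpoints actually reach $\mathrm{int}(\bar C)$). Reconciling these two demands requires a careful choice of the escape points $q_k$, most likely coupled with a diagonal argument or a further extraction of a subsequence so that $\inf_{m\ne n}\mathrm{dist}(\tfrac12(p_n+p_m),X\setminus\bar C)>0$ for every $n$, which is exactly what is needed to pick the $\e_n$ as above. A secondary concern, should $\bar C$ be unbounded, is that arbitrary support functionals need not be attained; but the nearest-point projection in the Hilbert space always produces an attained $\nu_{k+1}$, so this issue is bypassed by the construction.
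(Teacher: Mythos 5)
Your overall strategy is the same as the paper's: pick infinitely many boundary points $p_n$ with attained unit outer normals $\nu_n$, push each one slightly outside along its normal, and show the segments between the perturbed points still meet the body. The conditional part of your argument is fine (if $a_n=p_n+\e_n\nu_n$ and $\e_n+\e_m\le 2\,\mathrm{dist}(\tfrac12(p_n+p_m),X\setminus\bar C)$ for all $n\ne m$, then $A=\{a_n\}$ is hidden), and your observation that the nearest-point projection always yields an attained normal is a genuinely nice way to dodge non-attainment. But the two steps you flag as "the main obstacle" are exactly where the entire difficulty of the lemma lives, and you do not resolve either of them. First, the projection construction only guarantees that the half-spaces $H_k$ strictly decrease; it does not prevent $p_{k+1}$ from sharing a supporting hyperplane with an earlier $p_j$ (a boundary point can have many supporting hyperplanes, and when the boundary is a countable union of flat faces the projections naturally land inside faces). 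Producing infinitely many boundary points with pairwise distinct supporting behaviour is what the paper needs Mazur's theorem, a Borel selection of support functionals made continuous on a dense $G_\delta$, and then either Silver's theorem (uncountably many normals) or a Baire category argument (countably many) to achieve.

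Second, and more seriously, "no two $p_n$ share a supporting hyperplane" gives $\tfrac12(p_n+p_m)\in C$ for each pair but gives no uniform lower bound on $\inf_{m\ne n}\mathrm{dist}(\tfrac12(p_n+p_m),X\setminus\bar C)$; when nearly parallel flat faces accumulate, this infimum can be $0$ for every choice of one point per face, and then no positive $\e_n$ exists. Your proposed fix ("a diagonal argument or a further extraction of a subsequence") is an aspiration, not a proof: bounded sequences in $\ell_2$ need only converge weakly, and the distance to $X\setminus\bar C$ is not weakly lower semicontinuous, so subsequence extraction does not obviously produce the required uniformity. The paper handles this with two different devices: in the countable case, a quantitative angle estimate (Claim 4) showing that $\delta_n\le\tfrac13\e_n^2$ works uniformly in $m$ because the functionals satisfy $\|f_n-f_m\|\ge\e_n+\e_m$; in the uncountable case, a measure-theoretic pigeonhole on a Cantor set that chooses each new point inside $\bigcap_{k<n}\Lambda(x_k,\e_k)$ and only then fixes $\e_n$, thereby avoiding any uniform bound altogether. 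Without some such mechanism your construction cannot be completed, so as it stands the proof has a genuine gap at its central step.
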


\begin{proof} Let $\la \cdot,\cdot\ra$ denote the inner product of the Hilbert space $X$. Each element $y\in X$ determines a functional $y^*:x\mapsto\la x,y\ra$ on $X$. By the Riesz Representation Theorem \cite[3.4]{Conway}, the operator $y\mapsto y^*$ is a linear isometry between the Hilbert space $X$ and its dual Hilbert space $X^*$. By $S=\{x\in X:\|x\|=1\}$ and $S^*=\{x^*\in X^*:\|x^*\|=1\}$ we denote the unit spheres in  the Hilbert spaces $X$ and $X^*$, respectively.

Let $C$ be the interior of the convex body $\bar C$ and $\partial C=\bar C\setminus C$ be the boundary of $\bar C$ in the Hilbert space $X$. A functional $x^*\in S^*$ is called {\em supporting to $\bar C$} at a point $x\in \partial C$ if $x^*(x)=\sup x^*(C)$. The Hahn-Banach Theorem guarantees that for each point $x\in\partial C$ there is a supporting functional $x^*\in S^*$ to $C$ at $x$. If such a supporting functional is unique, then the point $x$ is called {\em smooth}.
By the classical Mazur's Theorem \cite[1.20]{Phelps}, the set $\Sigma$ of smooth points is a dense $G_\delta$ in the boundary $\partial C$ of $C$. By $\sigma:\Sigma\to S^*$ we shall denote the function assigning to each smooth point $x\in\Sigma$ the unique support functional $\sigma_x\in S^*$ to $C$ at $x$. Let us observe that the function $\sigma$ has closed graph $$\Gamma:=\{(x,\sigma_x):x\in\Sigma\}=(\Sigma\times S^*)\cap\{(x,x^*)\in C\times S^*:x^*(x)\ge\sup x^*(C)\}$$in the Polish space $\Sigma\times S^*$. Let $\pr_1:\Gamma\to \Sigma$ and $\pr_2:\Gamma\to S^*$ be the projections on the corresponding factors. Observe that the projection $\pr_1$ is a bijective and continuous map between Polish spaces. By the Lusin-Souslin Theorem \cite[15.1]{Ke}, it is a Borel isomorphism, which implies that the map $\sigma=\pr_2\circ \pr_1^{-1}:\Sigma\to S^*$ is Borel measurable.
By Theorem 8.38 \cite{Ke}, there is a dense $G_\delta$-subset $G\subset \Sigma$ such that the restriction $\sigma|G$ is continuous.

\begin{claim}\label{cl1} The image $\sigma(G)$ is infinite.
\end{claim}

\begin{proof} Assume that $\sigma(G)$ is finite and find functionals $f_1,\dots,f_n\in S^*$ such that $\sigma(G)=\{f_1,\dots,f_n\}$. Since the set $\bar C\subsetneq\bigcap_{i=1}^nf_i^{-1}((-\infty,\max f_i(\bar C)])$ is not polyhedral, there is a point $x\in X\setminus \bar C$ such that $f_i(x)\le \max f_i(\bar C)$ for all $i\le n$.
Fix any point $x_0\in C$. Since $C$ is open, $f_i(x_0)<\max f_i(\bar C)$ for all $i\le n$. Since $x\notin \bar C$, the segment $[x,x_0]$ meets the boundary $\partial C$ at some point $y=(1-t)x+tx_0$ where $t\in(0,1)$.
For the point $y$ we get $f_i(y)=(1-t)f_i(x)+tf_i(x_0)<\max f_i(\bar C)$. It follows that the set $U=\bigcap_{i=1}^n f_i^{-1}(-\infty,\max f_i(\bar C))$ is an open neighborhood of the point $y$ in $X$. Since the set $G$ in dense in $\partial C$, there is a point $z\in G\cap U$. Consider the unique supporting functional $\sigma_z$ to $C$ at the point $z$. The inclusion $z\in U$ implies that $\sigma_z\in \sigma(G)\setminus\{f_1,\dots,f_n\}$, which is a desired contradiction.
\end{proof}

Depending on the cardinality of the set $\sigma(G)$ we divide the further proof of Lemma~\ref{l0} into two Lemmas~\ref{l2.6} and \ref{l2}.

\begin{lemma}\label{l2.6} If the image $\sigma(G)$ is uncountable, then the set $C$ hides some infinite subset of $X$.
\end{lemma}

\begin{proof} The continuous map $\sigma|G$ induces a closed equivalence relation $E=\{(x,y)\in G\times G:\sigma(x)=\sigma(y)\}$ on the Polish space $G$. Since this equivalence relation has uncountably many equivalence classes, the Silver's Theorem \cite{Silver} yields a topological copy $K\subset G$ of the Cantor cube $\{0,1\}^\w$ such that $K$ has  at most one-point intersection with each equivalence class. This is equivalent to saying that the restriction $\sigma|K$ is injective. The existence of such Cantor set $K$ can be also derived from Feng's Theorem \cite{Feng} saying that the Open Coloring Axiom holds for analytic spaces.

For any $x\in K$ let $y_x\in S$ be the unique vector  such that $\sigma_x(z)=\la z,y_x\ra$ for all $z\in X$. For a real number $\e\in[0,1]$ consider the open subset $\Lambda(x,\e)=\{z\in K:[x+\e y_x,z]\cap C\ne\emptyset\}$ of the Cantor set $K$.

\begin{claim}\label{cl2} For any $x\in K$ the sets $\Lambda(x,\e)$ have the following properties:
\begin{enumerate}
\item $\Lambda(x,\e)\supset\Lambda(x,\delta)$ for any $0<\e\le \delta\le 1$;
\item $\bigcup_{\e\in(0,1]}\Lambda(x,\e)=K\setminus\{x\}$.
\end{enumerate}
\end{claim}

\begin{proof} 1. Fix any numbers $0<\e\le\delta\le 1$ and a point $z\in\Lambda(x,\delta)$. By the definition of the set $\Lambda(x,\delta)$, the segment $[x+\delta y_x,z]$ meets the open convex set  $C$ at some point $c$. Since the points $x,z$ belong to the convex set $\bar C$ and the point $c$ belongs to its interior $C$, the triangle $$\Delta=\{t_cc+t_xx+t_zz:t_c>0,\;t_x,t_z\ge 0,\;\;t_c+t_x+t_y=1\}$$ lies in the interior $C$ of $\bar C$. Since the segment $[x+\e y_x,z]$ intersects this triangle, it has non-empty intersection with $C$.
\smallskip

2. Take any point $z\in K\setminus\{x\}$. Since $\sigma|K$ is injective, the supporting functionals $\sigma_x$ and $\sigma_y$ are distinct. Then the open segment ${]x,z[}=[x,z]\setminus\{x,z\}$ lies in the interior $C$ of $\bar C$. In the opposite case, $[x,z]\subset\partial C$ and for the midpoint $\frac12x+\frac12z$ there would exist a supporting functional $x^*$, which would be supporting for each point of the segment $[x,y]$. This is impossible as the points $x,z$ are smooth and have unique and distinct supporting functionals. This contradiction proves that the segment $[x,z]$ meets the interior $C$ of $\bar C$. Then for some positive $\e>0$ the segment $[x+\e y_x,z]$ also meets $C$, which implies that $z\in\Lambda(x,\e)$.
\end{proof}

Being homeomorphic to the Cantor cube, the space $K$ carries an atomless $\sigma$-additive Borel probability measure $\mu$. Fix any point $x_0\in K$. Using Claim~\ref{cl2}(2), find $\e_0\in(0,1]$ such that $\mu(\Lambda(x_0,\e_0))>1-2^{-1}$. Next proceed by induction and construct a sequence of points $(x_n)_{n\in\w}$ and a sequence of positive real numbers $(\e_n)_{n\in\w}$ such that for every $n\in\IN$
\begin{enumerate}
\item $x_n\in \bigcap_{k<n}\Lambda(x_k,\e_k)$;
\item $\mu(\Lambda(x_n,\e_n))>1-2^{-n-1}$;
\item $[x_k+\e_k y_{x_k},x_n+\e_n y_{x_n}]\cap C\ne\emptyset$ for all $k<n$;
\item $x_n+\e_n y_{x_n}\notin \bar C$.
\end{enumerate}
Assume that for some $n$, the points $x_k$, $k<n$, and real numbers $\e_k$, $k<n$, have been
constructed. Consider the intersection $\bigcap_{k<n}\Lambda(x_k,\e_k)$ and observe that it has positive measure:
$$
\begin{aligned}
&\mu\big(\bigcap_{k<n}\Lambda(x_k,\e_k)\big)=1-\mu\big(K\setminus \bigcap_{k<n}\Lambda(x_k,\e_k)\big)=\\
&=1-\mu\big(\bigcup_{k<n}K\setminus\Lambda(x_k,\e_k)\big)\ge 1-\sum_{k<n}\mu(K\setminus\Lambda(x_k,\e_k))=\\
&=1-\sum_{k<n}
(1-\mu(\Lambda(x_k,\e_k))>1-\sum_{k<n}2^{-k-1}>0.
\end{aligned}
$$
So, this intersection is not empty and we can select a point $x_n$ satisfying the condition (1). For every $k<n$ the definition of the set $\Lambda(x_k,\e_k)$ ensures that the segment $[x_k+\e_k y_{x_k},x_n]$ meets the interior $C$ of the convex set $\bar C$. Consequently, there is $\e'_n>0$ such that for every $\e_n\le \e_n'$ and every $k<n$ the segment $[x_k+\e_k y_{x_k},x_n+\e_n y_{x_n}]$ still meets the open set $C$. Finally, using Claim~\ref{cl2}(2), choose a positive real $\e_n\in(0,\e_n']$ such that $\mu(\Lambda(x_n,\e_n'))>1-2^{-n-1}$.
Observe that $$\sigma_{x_n}(x_n+\e_n y_{x_n})=\sigma_{x_n}(x_n)+\e_n\sigma_{x_n}(y_{x_n})>\max \sigma_{x_n}(\bar C)+\e_n$$and hence $x_n+\e_n y_{x_n}\notin \bar C$.
This completes the inductive step.
\smallskip

The conditions (3) and (4) of the inductive construction guarantee that $A=\{x_n+\e_ny_{x_n}\}_{n\in\w}$ is a required  infinite set, hidden behind the convex set $\bar C$.
\end{proof}

\begin{lemma}\label{l2} If the image $\sigma(G)$ is countable, then the set $\bar C$ hides some infinite subset of $X$.
\end{lemma}

\begin{proof} Denote by $F$ the set of functionals $f\in\sigma(G)$ for which  the set $f^{-1}(\sup f(C))\cap C$ has non-empty interior in $\partial C$.

\begin{claim}\label{cl2.9} The set $F$ is infinite.
\end{claim}

\begin{proof} Assume that the set $F$ is finite and write $F=\{f_1,\dots,f_n\}$ for some functionals $f_1,\dots,f_n\in S^*$. Since $\bar C$ is not polyhedral, $$\bar C\ne\bigcap_{i=1}^nf_i^{-1}((-\infty,\max f_i(\bar C)]).$$ Repeating the argument from Claim~\ref{cl1}, we can find a point $y\in\partial C$ such that $f_i(y)<\max f_i(\bar C)$ for all $i\le n$. Then $U=\bigcap_{i=1}^nf_i^{-1}((-\infty,\max f_i(\bar C))$ is an open neighborhood of $y$ in $X$. Since $G\cap U\subset\bigcup_{f\in \sigma(G)}f^{-1}(\max f(\bar C))$, the Baire Theorem guarantees that for some functional $f\in \sigma(G)$ the intersection $f^{-1}(\max f(\bar C))\cap G\cap U$ has non-empty interior in $G\cap U$. Since $G\cap U$ is dense in $U\cap\partial C$, the intersection $f^{-1}(\max f(\bar C))\cap U$ has non-empty interior in $U\cap\partial C$ and in $\partial C$. Consequently, $f\in F$. Since $f^{-1}(\max f(\bar C))\cap U\ne \emptyset$, we conclude that $f\in F\setminus\{f_1,\dots,f_n\}$, which is a desired contradiction.
\end{proof}

By Claim~\ref{cl2.9}, the set $F\subset \sigma(G)\subset S^*$ is infinite and hence contains an infinite discrete subspace $\{f_n\}_{n\in\w}$. By the definition of $F$, for every $n\in\w$ we can choose a point $x_n\in\partial C$ and a positive real number $\e_n$ such that $\partial C\cap \bar B(x_n,\e_n)\subset f_n^{-1}(\max f_n(\bar C))$. Here $\bar B(x_n,\e_n)=\{x\in X:\|x-x_n\|\le \e\}$ denotes the closed $\e_n$-ball centered at $x_n$. Moreover, since the subspace $\{f_n\}_{n\in\w}$ of $S^*$ is discrete, we can additionally assume that $\bar B(f_n,\e_n)\cap \bar B(f_m,\e_m)=\emptyset$ for any distinct $n,m\in\w$. 
For every $n\in\w$ let $y_n\in S$ be the unique point such that $f_n(z)=\la z,y_n\ra$ for all $z\in X$. The Riesz Representation Theorem guarantees that
$$
\|y_n-y_m\|=\|f_n-f_m\|\ge \e_n+\e_m\mbox{ \ for all $n\ne m$}.
$$

We shall need the following elementary (but not trivial) geometric fact.

\begin{claim}\label{cl4} For any distinct numbers $n,m\in\w$ and a positive real number $\delta_n\le \frac13\e_n^2$ the segment $[x_n+\delta_ny_n,x_m]$ meets the open convex set $C$.
\end{claim}

\begin{proof} Assume conversely that $[x_n+\delta_n y_n,x_m]\cap C=\emptyset$. Taking into account that $f^{-1}_n(f_n(x_n))\cap \bar B(x_n,\e_n)\subset\bar C$, we conclude that $\|x_n-x_m\|\ge\e_n$. Now consider the unit vector $$\mathbf i=\frac{x_m-x_n}{\|x_m-x_n\|}.$$

Since $\la x_m,y_n\ra=f_n(x_m)\le \max f_n(\bar C)=f_n(x_n)=\la x_n,y_n\ra$, we get $\la x_m-x_n,y_n\ra\le 0$, which means that the angle between the vectors $y_n$ and $\mathbf i$ is obtuse. Since $[x_n+\delta_n y_n,x_m]\cap C=\emptyset$, the unit vector $y_n$ is not equal to $-\mathbf i$ and hence the unit vector $$\mathbf j=\frac{y_n-\la\mathbf i,y_n\ra\cdot \mathbf i}{\|y_n-\la\mathbf i,y_n\ra\cdot\mathbf i\|}$$ is well-defined.
Let $\alpha$ be the angle between the vectors $y_n$ and $\mathbf j$. It follows that $y_n=-\sin(\alpha)\,\mathbf i+\cos(\alpha)\,\mathbf j$. Consider the vector $y_n^\perp=\cos(\alpha)\,\mathbf i+\sin(\alpha)\,\mathbf j$, which is  orthogonal to the vector $y_n$. Looking at the following picture, we can see that the angle $\alpha$ is less than the angle $\beta$ between the vectors $y_n^\perp$ and $x_m-(x_n+\delta_ny_n)$.

\begin{picture}(200,150)(-120,-20)
\put(0,0){\line(1,0){120}}
\put(0,0){\vector(1,0){93}}
\put(90,-11){$\mathbf i$}
\put(0,0){\circle*{2}}
\put(120,0){\circle*{2}}
\put(117,-10){$x_m$}
\put(0,0){\vector(-1,4){23}}
\put(-25,96){$y_n$}
\put(-15,12){$\delta_n$}
\put(-5,-10){$x_n$}
\put(0,0){\vector(4,1){94}}
\put(97,22){$y_n^\perp$}
\put(15,10){$\e_n$}
\put(48,12){\circle*{2}}
\qbezier(0,40)(-5,41)(-10,39)
\put(-8,44){\footnotesize $\alpha$}
\qbezier(72,18)(74,15)(72,12)
\put(80,12){\footnotesize $\beta$}
\put(0,0){\vector(0,1){95}}
\put(3,96){$\mathbf j$}
\put(-8,32){\circle*{2}}
\put(-8,32){\line(4,-1){128}}
\end{picture}

Since $x_n+\e_n y_n^\perp\in f^{-1}_n(f_n(x_n))\cap \bar B(x_n,\e_n)\subset\bar C$ and $[x_n+\delta_ny_n,x_m]\cap C=\emptyset$, the angle $\beta$ is less than $\arctan(\frac{\delta_n}{\e_n})$. Then
$$\|y_n-\mathbf j\|=2\sin(\alpha/2)\le\alpha\le\beta\le\arctan(\delta_n/\e_n)\le\frac{\delta_n}{\e_n}\le\frac13\e_n.$$

Next, we evaluate the distance $\|y_m-\mathbf j\|$. It is clear that $\|y_m-\mathbf j\|=2\sin(\gamma/2)$ where $\gamma$ is the angle between the vectors $y_m$ and $\mathbf j$.

Let us consider separately two possible cases.
\smallskip

1) The vector $y_m$ lies in the plane spanned by the vectors $\mathbf i$ and $\mathbf j$.
Since $f_n(x)=\la x,y_m\ra$ is a supporting functional for $C$ at the point $x_m$, we get $\la x_n-x_m,y_m\ra\le 0$ and hence $\la \mathbf i,y_m\ra\ge 0$.

On the other hand, $[x_n+\delta_n y_n,x_m]\cap C=\emptyset$ and $f_m^{-1}(f_m(x_m))\cap B(x_m,\e_m)\subset\bar C$ imply that $\la x_n+\delta_n y_n-x_m,y_m\ra\ge 0$ and $\la x_n+\delta_n\mathbf j-x_m,y_m\ra\ge 0$. Consequently, $\gamma<\pi/2$ and $y_m=\sin(\gamma)\mathbf i+\cos(\gamma)\mathbf j$.
It follows that
$$-\|x_n-x_m\|\sin(\gamma)+\delta_n\cos(\gamma)=\la x_n-x_m+\delta_n\mathbf j,y_m\ra\ge0$$ and hence $\tan(\gamma)\le \frac{\delta_n}{\|x_n-x_m\|}\le\frac{\delta_n}{\e_n}$ and
$$\|y_m-\mathbf j\|=2\sin(\gamma/2)\le\gamma\le\tan(\gamma)\le\frac{\delta_n}{\e_n}\le\frac13\e_n.$$
Then $$\|f_n-f_m\|=\|y_n-y_m\|\le\|y_n-\mathbf j\|+\|\mathbf j-y_m\|\le\frac23\e_n<\e_n+\e_m,$$
which contradicts the choice of the sequence $(\e_k)$.
\smallskip

2) The vectors $\mathbf i,\mathbf j, y_m$ are linearly
independent. Let $\mathbf k$ be a vector of unit length in $X$ such
that $\mathbf k$ is orthogonal to $\mathbf i$ and $\mathbf j$ and
$y_m=a\mathbf i+b\mathbf j+c\mathbf k$  for some real numbers
$a,b,c$. It follows that $x_n\pm \e_n\mathbf k\in
f_n^{-1}(f_n(x_n))\cap \bar B(x_n,\e_n)\subset\bar C$. Since $f_m$
is a supporting functional to $C$ at the point $x_m$, we get
$0\ge\la x_n\pm\e_n\mathbf k-x_m,y_m\ra=-\|x_n-x_m\|a\pm\e_nc$,
which implies $$|c|\le
 \frac{\|x_n-x_m\|}{\e_n}a.$$ On the other hand, $[x_n+\delta_n\mathbf j,x_m]\cap C=\emptyset$ implies $0\le\la x_n+\delta_n\mathbf j-x_m,y_m\ra=-\|x_n-x_m\|a+\delta_nb$ and $$\frac{a}{b}\le\frac{\delta_n}{\|x_n-x_m\|}.$$ Now we see that
$$
\begin{aligned}
\|y_m-\mathbf j\|&=2\sin(\gamma/2)\le\tan(\gamma)=\frac{\sqrt{a^2+c^2}}{|b|}\le
\frac{a}{b}\sqrt{1+\frac{\|x_n-x_m\|^2}{\e_n^2}}\le\\
&\le\frac{\delta_n}{\|x_n-x_m\|}\sqrt{1+\frac{\|x_n-x_m\|^2}{\e_n^2}}\le
{\delta_n}\sqrt{\frac1{\|x_n-x_m\|^2}+\frac{1}{\e_n^2}}\le\\
&\le\delta_n\sqrt{\frac1{\e_n^2}+\frac1{\e_n^2}}=\sqrt{2}\frac{\delta_n}{\e_n}\le\frac{\sqrt{2}}3\e_n.
\end{aligned}
$$
Then $\|f_n-f_m\|=\|y_n-y_m\|\le\|y_n-\mathbf j\|+\|\mathbf j-y_m\|\le\frac13\e_n+\frac{\sqrt{2}}3\e_n<\e_n+\e_m$, which contradicts the choice of the sequence $(\e_k)$. This contradiction completes the proof of Claim~\ref{cl4}.
\end{proof}

Now we can continue to prove Lemma~\ref{l2}.
By induction for every $n\in\w$ we shall choose a positive real number $\delta_{n}$ such that
\begin{enumerate}
\item $\delta_n\le \frac13\e_n^2$;
\item $[x_k+\delta_ky_k,x_n+\delta_ny_n]\cap C\ne\emptyset$ for any $k<n$.
\end{enumerate}

To start the inductive construction put $\delta_0=\frac13\e_0^2$. Assume that for some $n\in\w$ we have constructed positive real numbers $\delta_k$, $k<n$, satisfying the conditions (1)--(2). By Claim~\ref{cl4}, for every $k<n$ the intersection $[x_k+\delta_k y_k,x_n]\cap C$ is not empty. Since the set $C$ is open, we can choose a positive $\delta_n\le\frac13\e_n^2$ so small that for every $k<n$ the intersection $[x_k+\delta_ky_k,x_n+\delta_ny_n]\cap C$ still is not empty. This completes the inductive construction.

It follows from (2) that the infinite set $A=\{x_n+\delta_ny_n\}_{n\in\w}$ is hidden behind the convex set $\bar C$.
\end{proof}
Lemmas~\ref{l2.6} and \ref{l2} complete the proof of Lemma~\ref{l0}.
\end{proof}

\section{Proof of Theorem~\ref{main}}\label{main:pf}

The implications  $(1)\Ra(2)\Ra(3)\Ra(1)$ of Theorem~\ref{main} are proved in the following three lemmas.

\begin{lemma} If $C$ is polyhedral in $\overline{\lin}(C)$, then $C$ is polyhedric in $\aff(C)$.
\end{lemma}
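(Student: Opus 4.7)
The plan is to take the half-space representation guaranteed by polyhedrality and simply intersect each half-space with $\aff(C)$, producing the required half-spaces of the affine space $\aff(C)$.

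First I would invoke the polyhedrality hypothesis to fix linear continuous functionals $f_1,\dots,f_n:\overline{\lin}(C)\to\IR$ and reals $a_1,\dots,a_n$ with
$$C=\bigcap_{i=1}^nf_i^{-1}\big((-\infty,a_i]\big).$$
Since affine combinations are in particular linear combinations, we have the chain $\aff(C)\subset\lin(C)\subset\overline{\lin}(C)$, so each $f_i$ restricts sensibly to $\aff(C)$.

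Then I would set
$$H_i\;:=\;f_i^{-1}\big((-\infty,a_i]\big)\cap\aff(C)\qquad(i=1,\dots,n),$$
and verify the two defining properties of polyhedricity. Each $H_i$ is convex as the intersection of two convex sets. Its complement in $\aff(C)$ is
$$\aff(C)\setminus H_i\;=\;\{x\in\aff(C):f_i(x)>a_i\},$$
which is the preimage of an open half-line under an affine map, hence convex. Finally, since $C\subset\aff(C)$,
$$\bigcap_{i=1}^nH_i\;=\;\aff(C)\cap\bigcap_{i=1}^nf_i^{-1}\big((-\infty,a_i]\big)\;=\;\aff(C)\cap C\;=\;C,$$
which is exactly the definition of $C$ being polyhedric in $\aff(C)$.

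There is essentially no obstacle here: the implication $(1)\Rightarrow(2)$ amounts to unwrapping definitions, the only subtlety being the observation $\aff(C)\subset\overline{\lin}(C)$ that ensures the $f_i$ can be evaluated on points of $\aff(C)$. The real content of Theorem~\ref{main} will lie in the remaining two implications, $(2)\Rightarrow(3)$ and especially $(3)\Rightarrow(1)$, where Lemma~\ref{l0} is to be deployed.
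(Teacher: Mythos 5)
Your proposal is correct and follows exactly the paper's own argument: fix the half-space representation of $C$ in $\overline{\lin}(C)$, set $H_i=\aff(C)\cap f_i^{-1}((-\infty,a_i])$, and note that both $H_i$ and its complement $\aff(C)\cap f_i^{-1}((a_i,+\infty))$ are convex. Nothing to add.
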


\begin{proof} If $C$ is polyhedral in $\overline{\lin}(C)$, then $C=\bigcap_{i=1}^n f_i^{-1}((-\infty,a_i])$ for some linear functionals $f_1,\dots,f_n:\overline{\lin}(C)\to\IR$ and some real numbers $a_1,\dots,a_n$.   For every $i\le n$ consider the convex set $H_i=\aff(C)\cap f_i^{-1}((-\infty,a_i])$ and observe that its complement $\aff(C)\setminus H_i=\aff(C)\cap f_i^{-1}((a_i,+\infty))$ also is convex. Since $C=\bigcap_{i=1}^n H_i$, the set $C$ is polyhedric in $\aff(C)$.
\end{proof}

\begin{lemma} If a convex subset $C$ of a linear space $X$ is polyhedric in $\aff(C)$, then $C$ hides no infinite subset $A\subset X\setminus C$.
\end{lemma}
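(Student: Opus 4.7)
The plan is to assume for contradiction that an infinite set $A\subset X\setminus C$ is hidden behind $C$ and, writing $C=\bigcap_{i=1}^n H_i$ with each $H_i\subset\aff(C)$ convex and each complement $K_i:=\aff(C)\setminus H_i$ convex, to derive a contradiction in two stages: first reduce to the case $A\subset\aff(C)$, and then apply the polyhedric decomposition together with the Pigeonhole Principle.

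For the reduction, I would suppose some $a_0\in A\setminus\aff(C)$ and show this forces $|A|\le 2$. First, any $b\in A\setminus\{a_0\}$ lying in $\aff(C)$ would satisfy $[a_0,b]\cap\aff(C)=\{b\}$ (an affine subspace meets a segment with exactly one endpoint on it in at most one point), so the hidden condition would force $b\in C$, a contradiction. Hence every point of $A\setminus\{a_0\}$ lies outside $\aff(C)$. Pick any distinct $a,b\in A\setminus\{a_0\}$; the intersections $c_a:=[a_0,a]\cap\aff(C)$ and $c_b:=[a_0,b]\cap\aff(C)$ are then single points and both belong to $C$ by hiddenness. If $a_0,a,b$ are affinely independent, the 2-plane $P$ they span meets $\aff(C)$ in a line $\ell$ passing through $c_a\ne c_b$; because $c_a$ lies strictly between $a_0$ and $a$, and $c_b$ strictly between $a_0$ and $b$, the line $\ell$ separates $a_0$ from both $a$ and $b$ in $P$, so $a$ and $b$ lie on the same side of $\ell$ and $[a,b]\cap\aff(C)=\emptyset$, contradicting hiddenness. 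If instead $a_0,a,b$ are collinear on a line $L$, then $L\cap\aff(C)$ is a single point $c$ that must lie simultaneously in all three segments $[a_0,a]$, $[a_0,b]$, $[a,b]$; inspecting the three possible orderings of three collinear points shows this only happens when $c$ coincides with the middle one of them, putting one of $a_0,a,b$ into $\aff(C)$, again a contradiction. Therefore $|A\setminus\{a_0\}|\le 1$, contradicting the infiniteness of $A$.

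With the reduction in hand, $A\subset\aff(C)\setminus C=\bigcup_{i=1}^n K_i$, so by the Pigeonhole Principle some $K_i$ contains two distinct points $a,b\in A$. Since $K_i$ is convex and disjoint from $H_i\supset C$, the whole segment $[a,b]$ lies in $K_i$ and hence misses $C$, contradicting the hidden condition on $A$. The main technical point is the planar-separation argument in the reduction step, especially the degenerate subcase in which $a_0,a,b$ are collinear; the generic non-collinear subcase reduces to the elementary fact that two points lying on opposite rays from a common vertex are on the same side of any affine line that separates the vertex from each of them.
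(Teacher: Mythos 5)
Your proof is correct and follows essentially the same route as the paper: reduce to the case $A\subset\aff(C)$ by an elementary affine-geometry argument, then apply the Pigeonhole Principle to the convex complements $\aff(C)\setminus H_i$. The only difference is in how the reduction is organized --- the paper shows directly that no three points of $A$ can lie outside $\aff(C)$ (since the affine subspace $P\cap\aff(C)$ would have to meet all three open sides of the triangle they span), whereas you show that one point outside $\aff(C)$ forces all the others outside and then at most one further point; both are correct and of the same elementary character.
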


\begin{proof} Assume conversely that some infinite subset $A\subset X\setminus C$ can be hidden behind $C$. First we show that the set $A\setminus \aff(C)$ contains at most two distinct points. Assume conversely that there are three pairwise distinct points $a_1,a_2,a_3\in A\setminus \aff(C)$. Let $P=\{t_1a_1+t_2a_2+t_3a_3:t_1+t_2+t_3=1\}$ be the affine subspace of $X$ spanned by the points $a_1,a_2,a_3$. The subspace $P$ has dimension 1 or 2. The intersection $P\cap \aff(C)$ is an affine subspace of $P$ that intersects the open segments $]a_1,a_2[$, $]a_1,a_3[$ and $]a_2,a_3[$ and hence coincides with $P$, which is not possible as $a_1,a_2,a_3\in P\setminus \aff(C)$.
So, $|A\setminus\aff(C)|\le 2$ and we lose no generality assuming that $A\subset\aff(C)$.

Being polyhedric in $\aff(C)$, the set $C$ can be written as a finite intersection $C=\bigcap_{i=1}^n H_i$ of convex subsets $H_1,\dots,H_n\subset \aff(C)$ having convex complements $\aff(C)\setminus H_i$, $i\le n$. Since $A\setminus C=\bigcup_{i=1}^n\aff(C)\setminus H_i$, by the Pigeonhole Principle, there is an index $i\in\{1,\dots,n\}$ such that the convex set $\aff(C)\setminus H_i$ contains  two distinct points $a,b\in A$ and hence contains the segment $[a,b]$, which is not possible as $[a,b]$ meets the set $C\subset H_i$.
\end{proof}

\begin{lemma} If a closed convex subset $C$ of a complete metric linear space $X$ is not polyhedral in $\overline{\lin}(C)$, then some infinite  $A\subset X\setminus C$ can be hidden behind $C$.
\end{lemma}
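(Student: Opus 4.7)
The plan is to reduce to Lemma~\ref{l0}, which is the analogous statement for closed convex bodies in a separable Hilbert space, via the transfer machinery of Lemmas~\ref{l2.1}--\ref{l2.3}. I would split the argument according to whether $C$ is algebraically finite- or infinite-dimensional.

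Suppose first that $\dim C<\infty$. After translating $C$ by one of its points I may assume $0\in C$, so $\overline{\lin}(C)=\aff(C)$ is a finite-dimensional linear subspace of $X$, and the non-polyhedrality hypothesis is preserved. A standard convexity fact (every closed convex subset of a finite-dimensional linear space has nonempty interior in its own affine hull) says that $C$ is a closed convex body in $\overline{\lin}(C)$; since every finite-dimensional Hausdorff linear topology is Euclidean, $\overline{\lin}(C)$ is a separable Hilbert space, and Lemma~\ref{l0} produces the desired infinite $A\subset\overline{\lin}(C)\setminus C\subset X\setminus C$ hidden behind $C$.

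In the infinite-dimensional case I would invoke Lemma~\ref{l2.3} to obtain an injective continuous affine operator $T:l_2\to X$ such that $\tilde C:=T^{-1}(C)$ is a closed convex body in $l_2$. Assuming $\tilde C$ is not polyhedral in $l_2$, Lemma~\ref{l0} furnishes an infinite $A'\subset l_2\setminus \tilde C$ hidden behind $\tilde C$; applying Lemma~\ref{l2.2} with $D=C$ (the injectivity of $T|A'$ being automatic from injectivity of $T$) transports $A'$ to the infinite set $T(A')\subset X\setminus C$, which is hidden behind $C$.

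The main obstacle is arranging that $\tilde C$ itself is not polyhedral in $l_2$: Lemma~\ref{l2.1} yields only the wrong direction of transfer, and a ``slim'' affine cross-section of a non-polyhedral convex body can easily be polyhedral (e.g., a segment inside a ball). I would exploit the freedom in Lemma~\ref{l2.3}'s construction, where the linearly independent sequence $(y_n)\subset C$ may be chosen arbitrarily: by taking $(y_n)$ additionally dense in $C$ the image $\bar S(l_2)$ of the linear part of $T$ becomes dense in $\overline{\lin}(C)$, so that every nonzero continuous linear functional on $\overline{\lin}(C)$ restricts to a nonzero continuous functional on $\bar S(l_2)$. The technical heart of the argument is then to show that under this density, a hypothetical finite polyhedral representation $\tilde C=\bigcap_{i=1}^k\phi_i^{-1}((-\infty,a_i])$ with $\phi_i:l_2\to\IR$ linear continuous would lift (via extension of the $\phi_i$ along the dense image $\bar S(l_2)$) to a finite polyhedral representation of $C$ in $\overline{\lin}(C)$, contradicting the assumption that $C$ is not polyhedral there.
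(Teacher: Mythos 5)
Your finite-dimensional case is fine and agrees with the paper's. The gap is in the infinite-dimensional case, and it sits exactly where you located it: ensuring that $\tilde C=T^{-1}(C)$ is not polyhedral in $l_2$. Your proposed fix does not close it. First, choosing $(y_n)$ dense in $C$ (or with dense linear span in $\overline{\lin}(C)$) is impossible in general: $X$ is only a complete linear metric space, so $\overline{\lin}(C)$ need not be separable, while $\bar S(l_2)$ always is. Second, even for separable $C$ the ``lifting'' step fails: in Lemma~\ref{l2.3} the operator $\bar S$ sends $e_n$ to $\frac1{4^n}y_n$ with $\|y_n\|\le 2^{-n}$, so $\bar S$ is a strongly compressing operator whose inverse on its image is badly discontinuous; a continuous functional $\phi_i$ on $l_2$ therefore induces on $\bar S(l_2)$ a functional that is in general \emph{not} continuous for the topology inherited from $\overline{\lin}(C)$, and so cannot be extended by density. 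Moreover, a representation $\tilde C=\bigcap_i\phi_i^{-1}((-\infty,a_i])$ only constrains $C$ on the slice $T(l_2)$, not on all of $C$, so even a successful extension would not yield a polyhedral representation of $C$.

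The paper avoids all of this by a preliminary reduction you are missing: quotient $X$ by the lineality space $\Ker(C)=\{x\in X:\forall c\in C\;\forall t\in\IR\;c+tx\in C\}$. The image $D=Q(C)$ is still closed and non-polyhedral in $\overline{\lin}(D)$ (Lemma~\ref{l2.1} applied to the open quotient map $Q$), and now $\Ker(D)=\{0\}$. Applying Lemma~\ref{l2.3} to $D$, the injectivity of $T$ forces $\Ker(E)=\{0\}$ for the convex body $E=T^{-1}(D)$ (a closed convex set containing a line contains every parallel line through its points). But any nonempty set $\bigcap_{i=1}^k\phi_i^{-1}((-\infty,a_i])$ in an infinite-dimensional space contains a translate of the finite-codimensional subspace $\bigcap_{i=1}^k\phi_i^{-1}(0)$ and hence has nontrivial lineality space; so $E$ is \emph{automatically} non-polyhedral and Lemma~\ref{l0} applies, with no transfer of functionals needed. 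That this quotient step is not cosmetic can be seen from $C=B_W\times Z$ in $l_2=W\oplus Z$ with $Z$ infinite-dimensional and $B_W$ the unit ball of $W$: here $C$ is non-polyhedral, yet a legitimate choice of $(y_n)$ inside $Z$ in Lemma~\ref{l2.3} gives $T(l_2)\subset C$ and $\tilde C=l_2$.
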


\begin{proof} Assume that $C$ is not polyhedral in $\overline{\lin}(C)$. It is easy to check that $$\Ker(C)=\{x\in X:\forall c\in C\;\forall t\in\IR\;\;c+tx\in C\}$$ is a closed linear subspace of $X$ and $C=C+\Ker(C)$. Let $Y=X/\Ker(C)$ be the quotient linear metric space and $Q:X\to Y$ be the quotient operator. By \cite[2.3.1]{Rol}, the operator $Q$ is open and by \cite[1.4.10]{Rol}, $Y$ is a complete linear metric space. Let $D=Q(C)$. The equality $C=C+\Ker(C)$ implies that $C=Q^{-1}(D)$ and $Y\setminus D=Q(X\setminus C)$ is an open set. So, $D$ is a closed convex set in $Y$. By Lemma~\ref{l2.1}, the set $D$ is not polyhedral in its closed linear hull $\overline{\lin}(D)$.

If the linear space $\overline{\lin}(D)$ is finite-dimensional, then it is isomorphic to a finite-dimensional Hilbert space $H$. Let $T:H\to\overline{\lin}(D)$ be the corresponding isomorphism. Since $D$ is not polyhedral in $\overline{\lin}(D)$, the preimage $E=T^{-1}(D)$ is not polyhedral in the Hilbert space $H$. Being finite-dimensional, the closed convex set $E$ is a convex body in its affine hull $\aff(E)\subset H$. Then for every $e_0\in E$ the convex set $E_0=E-e_0$ is a convex body in the linear subspace $H_0=\aff(E)-e_0$ of $H$. Since $E$ is not polyhedral in $H$, the shift $E_0=E-e_0$ is not polyhedral in the Hilbert space $H_0$. By Lemma~\ref{l0} the set $E_0$ hides an infinite subset $A_0\subset H_0\setminus E_0$. Then the set $E$ hides the infinite set $A_0+e_0$ and the set $T(E)=D$ hides the infinite set $B=T(A_0+e_0)$. Choose any subset $A\subset X$ such that $Q|A:A\to B$ is bijective. By Lemma~\ref{l2.2} the infinite set $A$ is hidden behind the convex set $C=Q^{-1}(D)$ and we are done.

Next, assume that the linear space $\overline{\lin}(D)$ is infinite-dimensional. Then the convex set $D$ also is infinite-dimensional. By Lemma~\ref{l2.3}, there is a continuous injective affine operator $T:l_2\to \overline{\lin}(D)$ such that $E=T^{-1}(D)$ is a closed convex body in $l_2$. Since $\Ker(D)=\{0\}$, we get $\Ker(E)=\{0\}$. This implies that the set $E$ is not polyhedral in the Hilbert $l_2$. By Lemma~\ref{l0}, the convex set $E$ hides some infinite subset $A_0\subset l_2\setminus E$. Then the infinite set $B=T(A_0)\subset Y\setminus D$ is hidden behind the convex set $D$. Choose any subset $A\subset X$ such that $Q|A:A\to B$ is bijective. By Lemma~\ref{l2.2} the infinite set $A$ is hidden behind the convex set $C=Q^{-1}(D)$ and we are done.
\end{proof}

\section{Open Problems}

It would be interesting to know whether a relative version of Theorem~\ref{main} is true.

\begin{problem} Let $C\subset D$ be two closed convex subsets of a complete linear metric space. Is it true that $C$ hides no infinite subset $A\subset D\setminus C$ if and only if $C$ is polyhedric in $D\cap\aff(C)$?
\end{problem}

In fact, the notions of polyhedric and hidden sets can be defined in a general context of convex structures, see \cite{Vel}. Let us recall that a {\em convex structure} on a set $X$ is a family $\C$ of subsets of $X$ such that
\begin{itemize}
\item $\emptyset,X\in\mathcal C$;
\item for any subfamily $\A\subset\C$ the intersection $\cap\A\in\C$;
\item for any linearly ordered subfamily $\A\subset\A$ we get $\cup\A\in\C$.
\end{itemize}
For a convex structure $(X,\C)$ and a subset $A\subset C$ the intersection $\conv(A)=\bigcap\{C\in\C:A\subset C\}$ is called the {\em convex hull} of the set $A$.

We say that a subset $C\subset X$ {\em hides} a subset $A\subset X$ if $\conv(\{a,b\})\cap C\ne\emptyset$ for any two distinct points $a,b\in A$.

A subset $C$ is {\em polyhedric in a subset $D\supset C$} if $C=\bigcap_{i=1}^nH_i$ for some subsets $H_1,\dots,H_n\subset D$ such that $H_i,D\setminus H_i\in \C$ for all $i\le n$.

\begin{problem} Given a convex structure $(X,\C)$  (possibly with topology) characterize (closed) convex sets $C\in\C$ that hide no infinite subset $A\subset X\setminus C$.
\end{problem}

\end{document}